\documentclass[10pt,twoside]{amsart}
\usepackage[all]{xy}
        \usepackage {amssymb,latexsym,amsthm,amsmath,mathtools,dsfont}
        \usepackage{enumitem,color}

        \textheight = 8.3in
        \textwidth = 5.8in
        \setlength{\oddsidemargin}{.8cm}
        \setlength{\evensidemargin}{.8cm}

\usepackage{array}
\usepackage{float}

\allowdisplaybreaks[4]
\usepackage{hyperref}
\usepackage[capitalize,nameinlink]{cleveref} 
\usepackage{xcolor} 
\hypersetup{
    colorlinks,
    linkcolor={red!80!black},
    citecolor={green!80!black},
    urlcolor={blue!80!black}
}
\usepackage[maxbibnames=99]{biblatex} 
\addbibresource{ref.bib} 

\setcounter{secnumdepth}{6}

\usepackage{mathtools}

\long\def\symbolfootnote[#1]#2{\begingroup%
\def\thefootnote{\fnsymbol{footnote}}\footnote[#1]{#2}\endgroup}
\newcommand{\PSL}{\mathrm{PSL}}
\newcommand{\SL}{\mathrm{SL}}
\newcommand{\irr}{\mathrm{Irr}}
\newcommand{\fq}{\mathbb{F}}

\makeatletter
\def\imod#1{\allowbreak\mkern10mu({\operator@font mod}\,\,#1)}
\makeatother
\makeatletter
\renewcommand*\env@matrix[1][*\c@MaxMatrixCols c]{%
  \hskip -\arraycolsep
  \let\@ifnextchar\new@ifnextchar
  \array{#1}}
\makeatother

\usepackage{makecell, caption, chngcntr} %

\captionsetup{font=bf}
\usepackage{siunitx} %
\sisetup{detect-all}
\counterwithin{table}{section}

\newtheorem{theorem}{Theorem}[section]
\newtheorem{lemma}[theorem]{Lemma}

\newtheorem*{theorem*}{Theorem}
\theoremstyle{definition}

\numberwithin{equation}{section}

\newcommand{\ignore}[1]{}

\newcommand{\mynote}[1]{}
\begin{document}
\setcounter{section}{0}
\title{Character covering number of $\mathrm{PSL}_2 (q)$}
\author[Arvind N.]{Namrata Arvind}
\email{namchey@gmail.com}
\address{The Institute of Mathematical Sciences, 4th Cross St, CIT Campus, Tharamani, Chennai, Tamil Nadu 600113, India}
\author[Panja S.]{Saikat Panja}
\email{panjasaikat300@gmail.com}
\address{Harish-Chandra Research Institute- Main Building, Chhatnag Road, Jhusi, Uttar Pradesh 211019, India}
\thanks{The first named author is partially supported by the IMSc postdoctoral fellowship and the second author has been partially supported by the HRI postdoctoral fellowship.}
\date{\today}
\subjclass[2020]{20C15, 20C33}
\keywords{character covering; projective special linear group}
\begin{abstract}
    For a group $G$ and a character $\chi$ of $G$, let 
    $c(\chi)$ denote the set of all irreducible characters of $G$, occurring in $\chi$. We prove that whenever $q\geq 8$, all non-trivial 
    irreducible character $\chi$ of $\PSL_2(q)$ satisfies 
    $c(\chi^4)=\irr\left(\PSL_2(q)\right)$ if $q=2^{2m+1}$ and
    $c(\chi^3)=\irr\left(\PSL_2(q)\right)$ otherwise.
\end{abstract}
\maketitle
\section{Introduction}\label{sec:intro}
Let $G$ be a finite group, $\irr(G)$ be the set of its irreducible characters and for any character $\chi$ of $G$ let us denote the set of all irreducible constituent of
it by $c(\chi)$. First proved in \cite[Ch. XV, Th. IV]{Burn55} by Burnside and later 
refined by Steinberg \cite{Ste62} and Brauer \cite{Br64} is a theorem which 
states that for any faithful representation $\chi$, there exists a positive integer $t(\chi)>0$ such that
\begin{align*}
    c(\chi)\cup c(\chi^2)\cup \cdots\cup c(\chi^{t(\chi)})=\irr(G).
\end{align*}
Furthermore when the center of $\chi\in\irr(G)$, i.e. $Z(\chi)=\{g\in G:|\chi(g)|=\chi(1)\}$ is trivial, Miller has shown that (see \cite[Lemma 4]{AMiller20}) there exists smallest $e(\chi)$ such that 
\begin{align*}
    c(\chi^{e(\chi)})=\irr(G).
\end{align*}
The numbers $\max\limits_{\substack{\chi\in\irr(G)\\\chi\text{ faithful}}}e(\chi)$ will be referred to be \emph{character covering number}. 
This terminology is motivated by the covering number problem in the case of conjugacy 
classes (for example see \cite{ElGoNi99}, \cite{LieSi23} and the references therein). There is a slightly stronger definition of
character covering numbers for groups due to Arad, Chillag, and Herzog, which calls $n$ to be a covering number of a group
$G$, if for all $\chi\in\irr(G)\setminus\{1\}$, once has
$c(\chi^n)=\irr(G)$ (see \cite{AraChHe86}). They further prove that
a finite nontrivial group $G$ has a finite character covering number if and only if $G$ is a nonabelian simple group (see \cite[Theorem 1]{AraChHe86}).
Our definition of character covering number is weaker, as
we are varying characters over the set of irreducible faithful characters.
Miller proved that (see \cite[Theorem 1]{AMiller20}) for $n>4$,
and $X=\{\chi\in\irr(S_n):\chi(1)>1\}$
\begin{enumerate}
    \item $c(\chi^k)=\irr(S_n)$ for all $\chi\in X$ if and only if $k\geq n-1$,
    \item $c(\chi)\cup c(\chi^2)\cup\cdots\cup c(\chi^k)=\irr(S_n)$ for all
    $\chi\in X$ if and only if $k\geq n-1$.
\end{enumerate}
We prove a similar result for $\PSL_2(q)$. Our main result is the following.
\begin{theorem}\label{thm:main}
    Let $q\geq 8$ be a power of prime and $G(q)=\PSL_2(q)$ be the projective special linear group (which is also simple). Then the covering number is $4$ if $q=2^{2m+1}$ for some $m$ and $3$ otherwise.
\end{theorem}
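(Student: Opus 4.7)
The plan is to work directly with the classical character table of $\PSL_2(q)$. Its irreducibles consist of the trivial character, the Steinberg character of degree $q$, principal series characters of degree $q+1$, discrete series (cuspidal) characters of degree $q-1$, and, in odd characteristic, two exceptional characters of degree $(q\pm 1)/2$; the values on conjugacy classes (identity, unipotent, split and non-split toral) are explicit sums of roots of unity. The central tool is the inner product identity
\[
\langle \chi^n, \psi \rangle \;=\; \frac{1}{|G(q)|}\sum_{g\in G(q)} \chi(g)^n\,\overline{\psi(g)},
\]
and the theorem splits into two tasks: prove $\langle \chi^n,\psi\rangle \geq 1$ for every non-trivial $\chi$ and every $\psi\in\irr(G(q))$ (with $n=3$ generically, $n=4$ when $q=2^{2m+1}$), and exhibit vanishing inner products at $n-1$.

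For the \textbf{upper bound}, I would bound the above sum by splitting over the conjugacy-class types. The identity term is $\chi(1)^n\psi(1)/|G(q)|$, a positive quantity that grows at least like a small power of $q$ (using $\chi(1)\geq (q-1)/2$). The remaining classes contribute at most
\[
\sum_{[g]\neq[1]} |\chi(g)|^n\,|\psi(g)|\cdot\frac{|g^{G(q)}|}{|G(q)|},
\]
controlled by the standard bounds $|\chi(g)|\leq 2$ on regular semisimple classes and $|\chi(g)|\lesssim \sqrt{q}$ on unipotent classes, together with class-size estimates of order $q(q\pm 1)$. Once $q\geq 8$ the identity contribution dominates, giving positivity of $\langle \chi^n,\psi\rangle$ and hence $c(\chi^n)=\irr(G(q))$.

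For the \textbf{lower bound} I would exhibit, for each $q\geq 8$, an explicit non-trivial $\chi$ and some $\psi$ with $\langle\chi^2,\psi\rangle = 0$; this shows the covering number is at least $3$. A natural candidate is $\chi$ of minimal degree and $\psi$ an appropriately chosen principal or discrete series character, with the vanishing reducing to an orthogonality identity among characters on a cyclic torus, computable in terms of roots of unity. For the exceptional case $q=2^{2m+1}$ I would further produce $\chi,\psi$ with $\langle\chi^3,\psi\rangle=0$. The relevant arithmetic is that $q+1=2^{2m+1}+1$ is divisible by $3$ precisely when the exponent of $2$ is odd; this furnishes a non-trivial cube root of unity inside the non-split torus, and the induced discrete-series characters of order dividing $3$ produce a cubic-level obstruction that is absent for $q$ an even power of $2$ or an odd prime power.

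The main obstacle will be the lower bound in the exceptional case. Generic quadratic vanishings can be located by examining tensor-square decompositions through Frobenius--Schur indicators and symmetric/alternating squares, but pinning down the precise pair $(\chi,\psi)$ witnessing a cubic vanishing for $q=2^{2m+1}$, and verifying that no such obstruction persists for other $q\geq 8$, requires a careful sweep through the arithmetic regimes $q$ an even power of $2$, $q\equiv 1\pmod 4$, and $q\equiv 3\pmod 4$. The upper bound, on the other hand, reduces to a lengthy but routine collection of inequalities once the character table is in hand.
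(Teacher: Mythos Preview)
Your lower-bound strategy is sound and matches the paper's: the obstruction at level $2$ comes from a discrete-series character (the paper uses $\langle(\psi_{q-1}^{(j)})^2,\psi_q\rangle=0$ for $q$ even, and $\langle(\psi'_{\pm})^2,\psi''_{\pm}\rangle=0$ for $q$ odd), and the level-$3$ obstruction for $q=2^{2m+1}$ is exactly the one you identify, namely $3\mid q+1$ allowing $j=(q+1)/3$ with $\langle(\psi_{q-1}^{(j)})^3,\psi_1\rangle=0$.

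The gap is in your upper bound. The claim that ``the identity contribution dominates once $q\ge 8$'' is false in the cases that matter. Take $q\equiv 3\pmod 4$, $\chi=\psi'_-$ of degree $(q-1)/2$, $\psi=\psi_1$, and $n=3$. The identity term is
\[
\frac{\chi(1)^3}{|G(q)|}=\frac{(q-1)^2}{4q(q+1)}\longrightarrow\frac14,
\]
so it is bounded, not growing. Meanwhile your absolute-value bound on the two unipotent classes gives roughly $2\cdot\bigl(\tfrac{\sqrt{q+1}}{2}\bigr)^3\cdot\frac{1}{q}\sim \tfrac{\sqrt q}{4}$, which tends to infinity. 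The true inner product is exactly $1$, coming from the cancellation $(-\omega)^3+(-\omega^*)^3=(3q-1)/4$ on the unipotent classes together with an exact evaluation on the non-split torus; crude bounds cannot see this. The same failure occurs for $q$ even with $\chi=\psi_{q-1}^{(j)}$ and $\psi=\psi_1$: the identity term tends to $1$, the triangle-inequality bound on the non-split toral classes is about $4$, and the actual value is $1$ or $0$ depending on whether $3j=q+1$. An inequality argument cannot separate these two outcomes, which is precisely the distinction the theorem requires.

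The paper therefore does not use any domination argument. It computes every inner product exactly, via closed-form lemmas evaluating $\sum_{a}(\zeta^{tk})^a+(\zeta^{tk})^{-a}$ over half a residue system, and tabulates the resulting integers case by case ($q$ even, $q\equiv 1$, $q\equiv 3\pmod 4$). Your proposal would need to replace the inequality step with these exact evaluations.
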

In \cref{sec:prelim} we mention the conjugacy class and
character table of $\PSL_2(q)$. In \cref{sec:proof-of-theorem} we will prove \cref{thm:main}.
\subsection*{Acknowledgement} A part of this work has been carried out during the second-named author's visit to IMSc in January of 2024. He would like to thank the
institute for its support during his stay.
\section{Some background results}\label{sec:prelim}
Let $\tau$ be a generator of $\fq_{q^2}^\times$ and fix $\sigma=\tau^{q+1}$, $\tau_0=\tau^{q-1}$. Then define
\begin{align*}
    S(a)  = \begin{pmatrix}
        \sigma^a & \\ & \sigma^{-a}
    \end{pmatrix} \text{and, }
    T(b)  = \begin{pmatrix}
      ~ & 1\\ 1& \tau_0^{b}+\tau_0^{bq}
    \end{pmatrix}.
\end{align*}.
Also if $\eta$ is a non-square in $\fq_q$, set $N'=\begin{pmatrix}
    1 & \eta \\ &1
\end{pmatrix}$ and set $N=\begin{pmatrix}
    1 & 1 \\ & 1
\end{pmatrix}$
\subsection{Conjugacy classes and character table}
First, we look at the case when $q$ is a power of $2$.
Then the conjugacy classes of $\PSL_2(q)=\SL_2(q)$ have representatives as
\begin{align*}
    I=\begin{pmatrix}
        1 & \\ & 1
    \end{pmatrix},
      N=  \begin{pmatrix}
        1 & 1\\ & 1
    \end{pmatrix},
         S(a), T(b),
\end{align*}
where $1\leq a\leq \frac{q}{2}-1$, $1\leq b\leq \frac{q}{2}$. Since there are $q+1$ many conjugacy classes, $\PSL_2(q)$ has $q+1$ irreducible representations up to equivalence. We have the representations 
$\psi_1$ and $\psi_q$ of dimensions $1$ and $q$ respectively. For each 
$1\leq k\leq \frac{q}{2}-1$ we have representations $\psi^{(k)}_{q+1}$ of dimension
$q+1$ and for each $1\leq j\leq \frac{q}{2}$, we have representations
$\psi^{(j)}_{q-1}$ of dimension $q-1$. The description of these representations 
can be found in \cite[pp. 104-105]{GeMa20}. We present the character table in \cref{table-psl(2)}. Note that here $\epsilon \in \mathbb{C}$ is a primitive $q-1$ root of unity and $\eta_0 \in \mathbb{C}$ is a primitive $q+1$ root of unity.
\begin{table}[H]
\setcellgapes{5pt}\makegapedcells
  \centering%
  \begin{tabular}{*{5}{c}}
    \hline
     &    &     & $1\leq a\leq \frac{q}{2}-1$ & $1\leq b\leq \frac{q}{2}$\\
    $x$& $I$ & $N$ & $S(a)$ & $T(b)$ \\
     $|x^G|$ & $1$ & $q^2-1$ & $q(q+1)$ & $q(q-1)$\\
     \hline
    $\psi_1$ & $1$ & $1$ & $1$ & $1$ \\
    $\psi_q$ & $q$ & $\cdot$ & $1$ & $-1$ \\
    $\psi_{q+1}^{(k)}$ & $q+1$ & $1$ & $\epsilon^{ak}+\epsilon^{-ak}$ &  $\cdot$\\
    $\psi_{q-1}^{(j)}$ & $q-1$ & $-1$ &  $\cdot$ & $-\eta_0^{bj}-\eta_0^{-bj}$ \\
    \hline
  \end{tabular}
    \caption{Character table of $\PSL_2(2^m)$}\label{table-psl(2)}
\end{table}
Similarly when $q$ is odd, the group $\PSL_2(q)$ has $\frac{q+5}{2}$ conjugacy classes. Hence $\PSL_2(q)$ has $\frac{q+5}{2}$ irreducible representations upto equivalence. These representations are obtained by looking at non-faithful irreducible representations of $\SL_2(q)$.

In \cref{table-psl(q)34} we present the character table of $\PSL_2(q)$ in case $q\equiv 3\pmod{4}$, where we have $k=2,4,\cdots, \frac{q-3}{2}$, $j=2,4,\cdots, \frac{q-3}{2}$, $\omega=\frac{1+\sqrt{-q}}{2}$ and $\omega^*=\frac{1-\sqrt{-q}}{2}$.
\begin{table}[H]
  \setcellgapes{5pt}\makegapedcells
  \centering%
  \begin{tabular}{*{7}{c}}
    \hline
    & & & & $1\leq a\leq \dfrac{q-3}{4}$ & $1\leq b\leq\dfrac{q-3}{4}$ &\\
    $x$& $I$ & $N$ & $N'$ & \makecell{$S(a)$} & \makecell{$T(b)$} & $T\left(\frac{q+1}{4}\right)$ \\
    $|x^G|$& $1$ & $\dfrac{q^2-1}{2}$ & $\dfrac{q^2-1}{2}$ & $q(q+1)$ & ${q(q-1)}$ & $\dfrac{q(q-1)}{2}$ \\
     \hline
    $\psi_1$ & $1$ & $1$ & $1$ & $1$ & $1$ & $1$ \\
    $\psi_q$ & $q$ & $\cdot$ & $\cdot$ & $1$ & $-1$ & $-1$ \\
    $\psi_{q+1}^{(k)}$ & $q+1$ & $1$ & $1$ & $\epsilon^{ak}+\epsilon^{-ak}$ & $\cdot$ & $\cdot$\\
    $\psi_{q-1}^{(j)}$ & $q-1$ & $-1$ & $-1$ & $\cdot$ & $-\eta_0^{bj}-\eta_0^{-bj}$ & $-2\eta_0^{\frac{(q+1)j}{4}}$ \\
    $\psi_-'$ & $\frac{q-1}{2}$ & $-\omega^*$ & $-\omega$ & $\cdot$ & $(-1)^{b+1}$ & $(-1)^{\frac{q+5}{4}}$ \\
    $\psi_-''$ & $\frac{q-1}{2}$ & $-\omega$ & $-\omega^*$ & $\cdot$ & $(-1)^{b+1}$ & $(-1)^{\frac{q+5}{4}}$ \\
    \hline
  \end{tabular}
      \caption{Character table of $\PSL_2(q)$, $q\equiv3\pmod{4}$}\label{table-psl(q)34}
\end{table}
\begin{table}[H]
  \setcellgapes{5pt}\makegapedcells
  \centering%
  \begin{tabular}{*{7}{c}}
    \hline
    &&&& $1\leq a\leq \dfrac{q-5}{4}$ && $1\leq b\leq \dfrac{q-1}{4}$\\
    $x$& $I$ & $N$ & $N'$ & \makecell{$S(a)$} & $S\left(\frac{q-1}{4}\right)$ & \makecell{$T(b)$} \\
    $|x^G|$& $1$ & $\dfrac{q^2-1}{2}$ & $\dfrac{q^2-1}{2}$ & $q(q+1)$ & $\dfrac{q(q+1)}{2}$ & $q(q-1)$ \\
     \hline
    $\psi_1$ & $1$ & $1$ & $1$ & $1$ & $1$ & $1$ \\
    $\psi_q$ & $q$ & $\cdot$ & $\cdot$ & $1$ & $1$ & $-1$ \\
    $\psi_{q+1}^{(k)}$ & $q+1$ & $1$ & $1$ & $\epsilon^{ak}+\epsilon^{-ak}$ & $2\epsilon^{\frac{(q-1)k}{4}}$ & $\cdot$\\
        $\psi_{q-1}^{(j)}$ & $q-1$ & $-1$ & $-1$ & $\cdot$ & $\cdot$ & $-\eta_0^{bj}-\eta_0^{-bj}$ \\
    $\psi_+'$ & $\frac{q+1}{2}$ & $\omega$ & $\omega^*$ & $(-1)^a$ & $(-1)^{\frac{q-1}{4}}$ & $\cdot$ \\
    $\psi_+''$ & $\frac{q+1}{2}$ & $\omega^*$ & $\omega$ & $(-1)^a$ & $(-1)^{\frac{q-1}{4}}$ & $\cdot$ \\
    \hline
  \end{tabular}
  \caption{Character table of $\PSL_2(q)$, $q\equiv1\pmod{4}$}\label{table-psl(q)14}
\end{table}
In \cref{table-psl(q)14} we present the character table of $\PSL_2(q)$ in case $q\equiv 1\pmod{4}$, where we have $k=2,4,\cdots, \frac{q-5}{2}$, $j=2,4,\cdots, \frac{q-1}{2}$, $\omega=\frac{1+\sqrt{q}}{2}$ and $\omega^*=\frac{1-\sqrt{q}}{2}$.

\section{Proof of \cref{thm:main}}\label{sec:proof-of-theorem}
\subsection{When $q$ is even}\label{sec:q-even}
We start with the following lemma, which evaluates sums of roots of unity.
\begin{lemma}\label{lem:sum-roots-evn}
    Let $q$ be power of $2$, $1\leq k\leq q/2-1$, $1\leq j\leq q/2$. Further $\epsilon$ and $\eta_0$ be $q-1$-th and $q+1$-th primitive roots of unity respectively. Then for any integer $t>0$,
    \begin{equation}
        \sum\limits_{a=1}^{q/2-1}(\epsilon^{tk})^{a}+(\epsilon^{tk})^{-a}=\begin{cases}
            -1&\text{ if }(tk,q-1)\neq (q-1)\\
            q-2&\text{ otherwise}    
        \end{cases},
    \end{equation} and
    \begin{equation}
        \sum\limits_{b=1}^{q/2}(\eta_0^{tj})^{b}+(\epsilon^{tj})^{-b}=\begin{cases}
            -1&\text{ if }(tj,q+1)\neq (q+1)\\
            q&\text{ otherwise}    
        \end{cases}.
    \end{equation}
\end{lemma}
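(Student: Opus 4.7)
The plan is to reduce both identities to the elementary fact that, for an $N$-th root of unity $\zeta$, one has $\sum_{i=0}^{N-1}\zeta^{i}=N$ when $\zeta=1$ and $0$ otherwise. Concretely, I will rewrite each sum as $\sum_{i=1}^{N-1}\zeta^{i}$, where $N\in\{q-1,q+1\}$ and $\zeta\in\{\epsilon^{tk},\eta_0^{tj}\}$ is an $N$-th root of unity, and then split on whether $\zeta=1$. The translation to the gcd condition is the standard equivalence $\zeta_N^{x}=1\iff N\mid x\iff \gcd(x,N)=N$, which matches the case split in the statement exactly.

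The only substantive step is the reduction. For the first sum I would show that, as $a$ ranges over $\{1,2,\dots,q/2-1\}$, the residues $\pm a\pmod{q-1}$ enumerate every nonzero class exactly once. Two short checks suffice: since $q-1$ is odd, no $a$ in the range satisfies $a\equiv -a\pmod{q-1}$, so each pair $\{a,-a\}$ contributes two distinct residues; and since $a+a'\leq q-2<q-1$ for any $a,a'$ in the range, no collision $a\equiv -a'$ occurs between different $a$. This yields exactly $2(q/2-1)=q-2$ distinct nonzero residues modulo $q-1$, which must account for all of them. The sum then equals $\sum_{i=1}^{q-2}\zeta^{i}=\bigl(\sum_{i=0}^{q-2}\zeta^{i}\bigr)-1$, which evaluates to $q-2$ if $\zeta=1$ and to $-1$ otherwise.

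The second sum proceeds in precisely the same manner with $\eta_0^{tj}$ in place of $\epsilon^{tk}$ and modulus $q+1$ (again odd, since $q$ is even). The bound $b+b'\leq q<q+1$ rules out cross-collisions, so $\{1,\dots,q/2\}$ paired with its negatives covers $\{1,\dots,q\}\pmod{q+1}$, and the geometric series collapses to $q$ when $\eta_0^{tj}=1$ and to $-1$ otherwise.

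There is no genuine obstacle: once the pairing observation is in place, both identities are bookkeeping. The parity of $q$ is used twice, and only twice, to guarantee that $q\pm 1$ is odd so that $a\mapsto-a$ is fixed-point-free on nonzero residues; the size bounds on the ranges of $a$ and $b$ then rule out non-trivial collisions, and the rest is the root-of-unity sum.
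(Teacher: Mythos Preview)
Your proposal is correct and follows essentially the same approach as the paper: both reduce the sum to $\sum_{i=1}^{N-1}\zeta^{i}$ by observing that the exponents $\pm a$ (respectively $\pm b$) run over all nonzero residues modulo $N=q\mp 1$, and then evaluate via the standard root-of-unity sum. The only difference is cosmetic: the paper simply asserts that the symmetric interval of integers is a complete residue system, while you verify this by an explicit collision count.
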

\begin{proof}
    Note that $\{1-q/2,2-q/2,\cdots,-1,0,1,2,\cdots,q/2-1\}$ is a complete set of residue modulo $q-1$. Hence if $(tk,q-1)\neq (q-1)$, we get
    \begin{align*}
        \sum\limits_{a=1}^{q/2-1}(\epsilon^{tk})^{a}+(\epsilon^{tk})^{-a}=-1+\sum\limits_{a=0}^{q-2}(\epsilon^{tk})^{a}.
    \end{align*}
    Since $\sum\limits_{a=0}^{q-2}(\epsilon^{tk})^{a}$ can be grouped into groups of $(tk,q-1)$ terms each having a zero-sum, the first conclusion follows.
    If $(tk,q-1)=q-1$, then the sum is $q-2$, as there are $q-2$ many terms. The second equality can be proved easily.
    
\end{proof}
Direct calculations prove the following equalities;
$$\begin{array}{cc}
    \langle \psi_q^2,\psi_1 \rangle=1,& \langle \psi_q^2, \psi_q\rangle=1\end{array}$$
    and using \cref{lem:sum-roots-evn}
\begin{align*}
    (q^3-q)\left\langle \left(\psi_{q+1}^{(k)}\right)^2,\psi_1 \right\rangle&=(q+1)^2+(q^2-1)+q(q+1)\sum\limits_{a=1}^{q/2-1}\left(\epsilon^{ai}+\epsilon^{-ai}\right)^2\\
    &=2q^2+2q+q(q+1)\left\{\sum\limits_{a=1}^{q/2-1}\left(\epsilon^{2ai}+\epsilon^{-2ai}\right)+(q-2)\right\}\\
    &=q^3-q,
\end{align*}
which implies $\left\langle \left(\psi_{q+1}^{(k)}\right)^2,\psi_1 \right\rangle=1$. Using similar computation we have $\left\langle \left(\psi_{q+1}^{(k)}\right)^2,\psi_q \right\rangle=2$. 


Note that
\begin{equation}\label{eq:psi-q-psi-k-2}
(q^3-q)\langle \psi_q^2,\psi_{q+1}^{(k)}\rangle
= q^2(q+1)+0+q(q+1)\sum\limits_{a=1}^{q/2-1}\left(\epsilon^{ak}+\epsilon^{-ak}\right)+0.
\end{equation}
\textcolor{black}{Since $k<q-1$, we have $(k,q-1)\neq q-1$. Hence by \cref{lem:sum-roots-evn} the \cref{eq:psi-q-psi-k-2} reduces to
\begin{align*}
    q^2(q+1)-q(q+1)=q^3-q,
\end{align*}}
which in turn proves that $\color{blue}\langle\psi_q^2,\psi_{q+1}^{(k)}\rangle=1$. Next, we have
\begin{align*}
    (q^3-q)\left\langle \psi_q^2,\psi_{q-1}^{(j)}\right\rangle=q^2(q-1)-q(q+1)\sum\limits_{b=1}^{q/2}\left(\eta_0^{bj}+\eta_0^{-bj}\right).
\end{align*}
Since $\eta_0$ is a primitive $q+1$-th root of unity, arguing as before 
we get that
$\color{blue}\langle \psi_q^2,\psi_{q-1}^{(j)}\rangle=1$. This proves that $\color{blue}c(\psi_q^2)=\irr(G(q))$. Similar calculations further 
show that $\color{blue}\left\langle \left(\psi^{(k)}_{q+1}\right)^2,\psi_q\right\rangle=2,$
Now,
\begin{align*}
    &(q^3-q)\left\langle \left(\psi^{(k)}_{q+1}\right)^2,\psi_{q+1}^{(k')}\right\rangle\\
    =&(q+1)^2+(q^2-1)+q(q+1)\sum\limits_{a=1}^{q/2-1}(\epsilon^{ak}+\epsilon^{-ak})^2(\epsilon^{ak'}+\epsilon^{-ak'})\\
    =&\begin{cases}
        q^3-q&\text{when }2k+k'\neq q-1\\
        2(q^3-q)&\text{otherwise}
    \end{cases}.
\end{align*}
Since for a given $k$, there exists a unique $k'$ such that $(2k+k',q-1)=q-1$, we have that
$\left\langle \left(\psi^{(k)}_{q+1}\right)^2,\psi_{q+1}^{(k')}\right\rangle=1$ for $q/2-2$ choices of $k'$ and for the rest it takes the value $2$ . This proves that $\color{blue}c\left(\left(\psi^{(k)}_{q+1}\right)^2\right)=\irr(G(q)).$
Although $\left\langle\left(\psi_{q-1}^{(j)}\right)^2,\psi_1\right\rangle=1$, we have 
\begin{align*}
    \left\langle\left(\psi^{(j)}_{q-1}\right)^2,\psi_q\right\rangle=q(q+1)^2-q(q^2-q)+q(q-1)=0.
\end{align*}
Hence we do not have $c\left(\left(\psi_{q-1}^{(j)}\right)^2\right)=\irr(\PSL_2(q))$. Also, when $3|q+1$,
$c\left(\left(\psi_{q-1}^{((q+1)/3)}\right)^3\right)\neq\irr(\PSL_2(q))$,
since $\left\langle\left(\psi_{q-1}^{((q+1)/3)}\right)^3,\psi_1\right\rangle=0$. However, we have the following values,
$$
\begin{array}{cc}
  \left\langle\left(\psi_{q-1}^{(j)}\right)^3,\psi_1\right\rangle=\begin{cases}
      1&\text{when }3j\neq q+1\\
      0&\text{when }3j=q+1
  \end{cases},   &  
  \left\langle\left(\psi_{q-1}^{(j)}\right)^3,\psi_q\right\rangle=\begin{cases}
      q-3&\text{when }3j\neq q+1\\
      q-2 &\text{when }3j=q+1
  \end{cases},
\end{array}
 $$
 \begin{align*}
     \left\langle\left(\psi_{q-1}^{(j)}\right)^3,\psi_{q-1}^{(j')}\right\rangle=\begin{cases}
      q-4&\text{when }(3j+j',q+1)\neq q+1\\
      q-3&\text{when }(3j+j',q+1)=q+1
      \end{cases},
 \end{align*}
     and
$ \left\langle\left(\psi_{q-1}^{(j)}\right)^3,\psi_{q+1}^{(k)}\right\rangle=q-2,$ which proves that
if $3\not|(q+1)$, then $c\left(\left(\psi_{q-1}^{(j)}\right)^3\right)=\irr(\PSL_2(q))$. In case $3|(q+1)$, we have the
following multiplicities for $j=(q+1)/3$.
$$\begin{array}{c|ccccc}
 \hline
 \chi& \psi_1 & \psi_q & \psi_{(q+1)}^{(k)} & \psi_{q-1}^{(j)} & \psi_{q-1}^{(j')}\\
 \hline
 \left\langle\left(\psi_{q-1}^{(j)}\right)^4,\chi\right\rangle&q-1 & q^2-4q+4 & q^2-3q+3 & q^2-5q+6 & q^2-5q+11\\
 \hline
\end{array}.$$
This proves \cref{thm:main} in case $q$ is even.

\subsection{When $q$ is odd}\label{sec:q-odd} This will be divided into two parts as the character table is dependent on the 
parity of $q$, modulo $4$. We start with the case $q\equiv 3 \pmod{4}$
and later derive the results for $q\equiv 1\pmod{4}$.
\subsubsection{Case I: $q\equiv 3\pmod{4}$} 
Let us start with the following lemma, analogous to \cref{lem:sum-roots-evn}.
\begin{lemma}\label{lem:sum-roots-odd-3}
    Let $q\equiv 3\pmod{4}$ be power of an odd prime, $k$ and $j$ be even integers satisfying $2\leq k\leq (q-3)/2$, $2\leq j\leq (q-3)/2$. Further $\epsilon$ and $\eta_0$ be $q-1$-th and $q+1$-th primitive roots of unity respectively. Then for any integer $t>0$,
    \begin{equation}
        \sum\limits_{a=1}^{(q-3)/4}(\epsilon^{tk})^{a}+(\epsilon^{tk})^{-a}=\begin{cases}
            -1&\text{ if }(tk,q-1)\neq (q-1)\\
            \frac{q-3}{2}&\text{ otherwise}    
        \end{cases},
    \end{equation} and
    \begin{equation}
        \sum\limits_{b=1}^{(q-3)/4}\left\{(\eta_0^{tj})^{b}+(\epsilon^{tj})^{-b}\right\}=\begin{cases}
            -1-\eta_0^{\frac{q+1}{4}\cdot tj}&\text{ if }\left({tj},{q+1}\right)\neq (q+1)\\
            \frac{q+1}{2}&\text{ otherwise}    
        \end{cases}.
    \end{equation}
\end{lemma}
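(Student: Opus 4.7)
The plan is to reduce both sums to evaluations of complete sums of roots of unity over cyclic groups of orders $(q-1)/2$ and $(q+1)/2$, respectively, exactly as was done for \cref{lem:sum-roots-evn}. The key new input is that $k$ and $j$ are assumed to be \emph{even}, so $\epsilon^{tk}$ and $\eta_0^{tj}$ actually sit inside proper subgroups of the corresponding roots of unity. The parity hypothesis $q\equiv 3\pmod 4$ will dictate the parity of these subgroup orders and, accordingly, the shape of the symmetric residue system we use.

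For the first equality, write $k=2k_0$ and $\zeta=\epsilon^2$. Then $\zeta$ is a primitive $m$-th root of unity with $m=(q-1)/2$, and $(\epsilon^{tk})^{\pm a}=\zeta^{\pm atk_0}$. Since $q\equiv 3\pmod 4$, the integer $m=(q-1)/2$ is \emph{odd}, so the set $\{-(m-1)/2,\dots,-1,0,1,\dots,(m-1)/2\}=\{-(q-3)/4,\dots,(q-3)/4\}$ is a symmetric complete system of residues modulo $m$. When $(tk,q-1)\neq q-1$ the element $\zeta^{tk_0}$ is a nontrivial $m$-th root of unity, so the full sum $\sum_{a=-(q-3)/4}^{(q-3)/4}\zeta^{atk_0}$ vanishes; peeling off the $a=0$ term and pairing $a$ with $-a$ gives the value $-1$. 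In the complementary case $\zeta^{tk_0}=1$, every pair contributes $2$, giving $2\cdot(q-3)/4=(q-3)/2$.

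For the second equality the argument is parallel, now with $j=2j_0$ and $\xi=\eta_0^2$ a primitive $n$-th root of unity where $n=(q+1)/2$. The crucial difference is that $n$ is now \emph{even} (again because $q\equiv 3\pmod 4$), so a symmetric complete residue system modulo $n$ must be asymmetric at one endpoint: I take $\{-(q-3)/4,\dots,-1,0,1,\dots,(q-3)/4,(q+1)/4\}$, which has exactly $n$ elements. Summing $\xi^{btj_0}$ over this set gives $0$ whenever $\xi^{tj_0}\neq 1$, equivalently $(tj,q+1)\neq q+1$. Isolating the $b=0$ term and the unpaired endpoint $\xi^{(q+1)tj_0/4}=\eta_0^{(q+1)tj/4}$ and pairing the remaining indices yields
\begin{equation*}
    \sum_{b=1}^{(q-3)/4}\bigl\{(\eta_0^{tj})^{b}+(\eta_0^{tj})^{-b}\bigr\}=-1-\eta_0^{(q+1)tj/4}.
\end{equation*}
When $\eta_0^{tj}=1$ a direct count of the terms in the defining sum gives the value stated in the lemma.

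The only real subtlety is matching the summation range $1\leq a,b\leq (q-3)/4$ to a symmetric complete residue system of the correct size. For the odd modulus $(q-1)/2$ the range fits perfectly into a balanced system and the $-1$ on the right-hand side is simply the missing $a=0$ term. For the even modulus $(q+1)/2$ no fully balanced system exists, and one is forced to add the asymmetric endpoint $(q+1)/4$; this asymmetry is precisely what produces the extra correction term $-\eta_0^{(q+1)tj/4}$ in the second formula, which I expect to be the main novelty relative to \cref{lem:sum-roots-evn}.
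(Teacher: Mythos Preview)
Your approach is essentially the same as the paper's: both reduce to complete symmetric residue systems and invoke the vanishing of nontrivial root-of-unity sums, exactly as in \cref{lem:sum-roots-evn}. You are in fact more careful than the paper, which states (somewhat loosely) that $\{-(q-3)/4,\dots,(q-3)/4\}$ is a complete residue system ``modulo $q-1$'' and that $\{-(q-3)/4,\dots,(q-3)/4,(q+1)/4\}$ is one ``modulo $q+1$''; your observation that the evenness of $k$ and $j$ lets one pass to $\epsilon^2$ and $\eta_0^2$, hence to moduli $(q-1)/2$ and $(q+1)/2$, is exactly what makes those cardinalities match, and your parity analysis of these two moduli cleanly explains why the extra endpoint $(q+1)/4$---and hence the correction term $-\eta_0^{(q+1)tj/4}$---appears only in the second formula.
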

\begin{proof}
    We note that $\left\{-\frac{q-3}{4},-\frac{q-3}{4}+1,\cdots,-1,0,1,\cdots,\frac{q-3}{4}\right\}$ is a complete set of residue modulo $q-1$. Also 
    $\left\{-\frac{q-3}{4},-\frac{q-3}{4}+1,\cdots,-1,0,1,\cdots,\frac{q-3}{4},\frac{q+1}{4}\right\}$ is a complete set of residue modulo $q+1$. The rest of the proof follows similarly, as explained in \cref{lem:sum-roots-evn}.
\end{proof}
We first have the following equalities;
$$
\begin{array}{cccc}
     \left\langle\psi_q^2,\psi_1\right\rangle=1,& \left\langle\psi_q^2,\psi_q\right\rangle=2, & \left\langle\psi_q^2,\psi_-'\right\rangle=1,  &
     \left\langle\psi_q^2,\psi_-''\right\rangle=1.
\end{array}
$$
Note that 
\begin{align*}\label{eq:psi-q-psi-q+1-k-3}
\frac{q^3-q}
{2}\left\langle\psi_q^2,\psi_{q+1}^{(k)}\right\rangle&=q^2(q+1)+q(q+1)\sum\limits_{a=1
}^{(q-3)/4}(\epsilon^{ak}+\epsilon^{-ak})\\
&=q^2(q+1)+q(q+1)\cdot(-1),   
\end{align*}
the last equality follows from \cref{lem:sum-roots-odd-3}. Hence
$\left\langle \langle\psi_q^2,\psi_{q+1}^{(k)}\right\rangle=2$.
Using similar argument we get that $\left\langle\psi_q^2,\psi_{q-1}^{(j)}\right\rangle=2$. We present the all values of the inner product of an irreducible 
character with the square of a non-trivial character 
in \cref{table-psl(q)34}. Before that, we present two computations, 
firstly the case of $\left\langle\left(\psi_{q+1}^{(k)}\right)^2,\psi_{q+1}^{(k')}\right\rangle$ and $\left\langle\left(\psi_{q-1}^{(j)}\right)^2,\psi'_-\right\rangle$. For any $k$, there exists
a unique $k'$ such that $2k+k'=q+1$ or $2k-k'=q+1$. We first deal with this case.
We have
\begin{align*} \frac{q^3-q}{2}\cdot\left\langle\left(\psi_{q+1}^{(k)}\right)^2,\psi_{q+1}^{(k')}\right\rangle&=(q+1)^3+\frac{q^2-1}{2}\cdot1+\frac{q^2-1}{2}\cdot1\\
&+q(q+1)\cdot\left\{\sum\limits_{a=1}^{(q-3)/4}\left(\epsilon^{2ak}+2+\epsilon^{-2ak}\right)\left(\epsilon^{ak'}+\epsilon^{-ak'}\right)\right\}\\
&=(q+1)^3+(q^2-1)+q(q+1)\left\{\sum\limits_{a=1}^{(q-3)/4}\left(\epsilon^{(2k+k')a}+\epsilon^{-(2k+k')a}\right)\right.\\
+&\left.2\sum\limits_{a=1}^{(q-3)/4}\left(\epsilon^{ak}+\epsilon^{-ak'}\right)+\sum\limits_{a=1}^{(q-3)/4}\left(\epsilon^{(2k-k')a}+\epsilon^{-(2k-k')a}\right)\right\}\\
&=(q+1)^3+(q^2-1)+q(q+1)\left(\dfrac{q-3}{2}-3\right)\\
&=\dfrac{3(q^3-q)}{2}.
\end{align*}
Now assume $k'$ to be such that none of $2k+k',2k-k'$ is equal to 
$q+1$. Then we get that
\begin{align*}
    \frac{q^3-q}{2}\cdot\left\langle\left(\psi_{q+1}^{(k)}\right)^2,\psi_{q+1}^{(k')}\right\rangle&=(q+1)^3+(q^2-1)+q(q+1)\left(-4\right)\\
    &=q^3-q.
\end{align*}
This implies that
\begin{align*}
\left\langle\left(\psi_{q+1}^{(k)}\right)^2,\psi_{q+1}^{(k')}\right\rangle=\begin{cases}
      3 & \text{if }2k+k'=q+1\text{ or }2k-k'=q+1\\
      2 & \text{otherwise}
    \end{cases}.
\end{align*}
For the second computation, We have
\begin{align*}
        \left\langle \left(\psi_{q-1}^{(j)}\right)^2,\psi_-'\right\rangle
        =& \frac{(q-1)^3}{2}-\frac{q^2-1}{2}\cdot(\omega+\omega^*)\\
        +&
        q(q-1)\sum\limits_{b=1}^{(q-3)/4}(\eta_0^{bj}+\eta_0^{-bj})^2(-1)^{b+1}\\+&2q(q-1)\eta_0^{(q+1)j/2}(-1)^{(q+5)/4}.
\end{align*}
Then 
\begin{align*}
    &q(q-1)\sum\limits_{b=1}^{(q-3)/4}(\eta_0^{bj}+\eta_0^{-bj})^2(-1)^{b+1}+2q(q-1)\eta_0^{(q+1)j/2}(-1)^{(q+5)/4}\\
    =&q(q-1)\left\{\sum\limits_{b=1}^{(q-3)/4}(\eta_0^{2bj}+\eta_{0}^{-2bj}+2)(-1)^{b+1}+2(-1)^{(q+5)/4}\right\}\\
    =&q(q-1)\left\{-\sum\limits_{b=1}^{(q-3)/4}((-\eta_0^{2j})^b+(-\eta_0^{2j})^{-b})+2\sum\limits_{b=1}^{(q+1)/4}(-1)^{b+1}\right\}.
\end{align*}
Since $\eta_0$ is a primitive $q+1$-th root of unity, $j$ is even and $q\equiv 3\pmod{4}$ (and hence $(q+1)/2$ is even), we get that
$\eta_0^{2j}$ is a $(q+1)/4$-th primitive root of unity. Now we divide this into two cases. 
\begin{enumerate}
    \item First assume that
$(q+1)/4$ is even. Then $-\eta_0^{2j}$ is again a $(q+1)/4$-th primitive root of unity. Hence $\sum\limits_{b=1}^{(q-3)/4}(-\eta_0^{2j})^b=-1$ and $\sum\limits_{b=1}^{(q+1)/4}(-1)^{b+1}=0$. Thus, in this case, we get
\begin{align*}
        \left\langle \left(\psi_{q-1}^{(j)}\right)^2,\psi_-'\right\rangle
        =2.
\end{align*}
\item Next let us assume $(q+1)/4$ is odd. In this case
$-\eta_0^{2j}$ is a primitive $(q+1)/2$-th root of unity. Now,
\begin{align*}
    \sum\limits_{b=1}^{(q-3)/4}((-\eta_0^{2j})^b+(-\eta_0^{2j})^{-b})=\sum\limits_{b=1}^{(q+1)/4}((-\eta_0^{2j})^b+(-\eta_0^{2j})^{-b}),
\end{align*}
since for $b=(q+1)/4$, we have $(-\eta_0^{2j})^b+(-\eta_0^{2j})^{-b}=0$. Thus, in this case as well we have that
\begin{align*}
        \left\langle \left(\psi_{q-1}^{(j)}\right)^2,\psi_-'\right\rangle
        =2.
\end{align*}
\end{enumerate}

Using similar calculations we get the table \cref{table:inner-product-3}, where $(a,b)$-th entry is given by the inner product of the $a$-th element of the first column and the $b$-th element of the first row. In this table $k'$ is the positive number such that
either $2k+k'$ or $2k-k'$ is equal to $q-1$. Also, $j'$ is the positive number such that
either $2j+j'$ or $2j-j'$ is equal to $q+1$. The $k''\neq k'$ are the integers ranging over other values of $k$, and similarly $j''\neq j'$.
\begin{table}[H]
    \centering
    \begin{tabular}{c|cccccccc}
        \hline
        $\chi$ & $\psi_1$ & $\psi_q$ & $\psi_{q+1}^{(k')}$ & $\psi_{q+1}^{(k'')}$ & $\psi_{q-1}^{(j')}$ & $\psi_{q-1}^{(j'')}$ & $\psi'_{-}$ & $\psi''_{-}$\\
        \hline
$\left\langle\left(\psi_{q+1}^{(k)}\right)^2,\chi\right\rangle$
& $1$ & $3$ & $3$ & $2$ & $2$ & $2$ & $1$ & $1$\\
$\left\langle\left(\psi_{q-1}^{(j)}\right)^2,\chi\right\rangle$
& $1$ & $1$ & $2$ & $2$ & $1$ & $2$ & $1$ & $1$\\
$\left\langle\left(\psi'_{-}\right)^3,\chi\right\rangle$
& $1$ & $\frac{q-3}{4}$ & $\frac{q+1}{4}$ & $\frac{q+1}{4}$ & $\frac{q-7}{4}$ & $\frac{q-7}{4}$ & $\frac{q-3}{4}$ & $\frac{q-3}{4}$\\
$\left\langle\left(\psi''_{-}\right)^3,\chi\right\rangle$
& $1$ & $\frac{q-3}{4}$ & $\frac{q+1}{4}$ & $\frac{q+1}{4}$ & $\frac{q-7}{4}$ & $\frac{q-7}{4}$ & $\frac{q-3}{4}$ & $\frac{q-3}{4}$\\
\hline
    \end{tabular}
    \caption{Inner product of the characters}
    \label{table:inner-product-3}
\end{table}

This finishes the proof of our claim. Hence in this case the covering number is $3$.

\subsubsection{Case II: $q\equiv 1\pmod{4}$} 
Before we look at the interaction of powers of a character of $\PSL_2(q)$ with its other characters we state a lemma similar to the previous cases.
\begin{lemma}\label{lem:sum-roots-odd-1}
    Let $q\equiv 1\pmod{4}$ be power of an odd prime, $k$ and $j$ be even integers satisfying $2\leq k\leq (q-5)/2$, $2\leq j\leq (q-1)/2$. Further $\epsilon$ and $\eta_0$ be $q-1$-th and $q+1$-th primitive roots of unity respectively. Then for any integer $t>0$,
    \begin{equation}
        \sum\limits_{b=1}^{(q-1)/4}(\eta^{tk})^{a}+(\eta^{tk})^{-a}=\begin{cases}
            -1&\text{ if }(tk,q-1)\neq (q+1)\\
            \frac{q-1}{2}&\text{ otherwise}    
        \end{cases},
    \end{equation} and
    \begin{equation}
        \sum\limits_{a=1}^{(q-5)/4}\left\{(\epsilon_0^{tj})^{b}+(\epsilon^{tj})^{-b}\right\}+2\epsilon_0^{\frac{q-1}{4}tj\cdot}=\begin{cases}
            -1+(-1)^{tj/2}&\text{ if }\left(\frac{tj}{2},\frac{q+1}{2}\right)\neq (q-1)/2\\
            \frac{q+1}{2}&\text{ otherwise}    
        \end{cases}.
    \end{equation}
\end{lemma}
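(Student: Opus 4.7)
The plan is to mirror the proofs of \cref{lem:sum-roots-evn} and \cref{lem:sum-roots-odd-3}, adjusting the residue-system bookkeeping to the parity constraint $q\equiv 1\pmod 4$. The relevant arithmetic differences are that $(q-1)/4$ is now an integer (while $(q+1)/4$ is not), that $(q-1)/2$ is even, and that $(q+1)/2$ is odd. Throughout, I take $k,j$ to be even, so that $\epsilon^{tk}$ and $\eta_0^{tj}$ are in fact powers of the primitive roots $\epsilon^{2}$ and $\eta_0^{2}$ of orders $(q-1)/2$ and $(q+1)/2$ respectively.

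For the $\eta_0$-identity (sum over $T$-class indices $b=1,\ldots,(q-1)/4$), my approach is as follows. Since $j$ is even, $\eta_0^{tj}$ is a power of the primitive $(q+1)/2$-th root of unity $\eta_0^{2}$. The set $\{-(q-1)/4,\ldots,-1,0,1,\ldots,(q-1)/4\}$ consists of $(q+1)/2$ consecutive integers, and hence constitutes a complete system of residues modulo $(q+1)/2$. Consequently, summing $(\eta_0^{tj})^{b}$ over this full system gives $0$ whenever $\eta_0^{tj}\neq 1$, so isolating the $b=0$ contribution of $1$ yields $-1$ for the desired sum. If instead $\eta_0^{tj}=1$, every term is $1$ and the sum is the number of nonzero indices, namely $(q-1)/2$.

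For the $\epsilon$-identity (sum over $S$-class indices $a=1,\ldots,(q-5)/4$ together with the doubled special term $2\epsilon^{(q-1)tk/4}$), I would first simplify $\epsilon^{(q-1)tk/4}=(\epsilon^{(q-1)/2})^{tk/2}=(-1)^{tk/2}$, which is well defined precisely because $k$ is even. Next, the set $\{-(q-5)/4,\ldots,-1,0,1,\ldots,(q-5)/4\}\cup\{(q-1)/4\}$ has $(q-1)/2$ elements, and modulo $(q-1)/2$ the singleton $(q-1)/4$ fills in exactly the residue missing from the symmetric block, so this union forms a complete residue system. Rewriting the target expression as the complete-period sum, minus $1$ to remove the $a=0$ contribution, plus one more copy of $\epsilon^{(q-1)tk/4}$ to account for the doubled coefficient at the special class, then gives $-1+(-1)^{tk/2}$ when $\epsilon^{tk}\neq 1$ and $(q-1)/2$ when $\epsilon^{tk}=1$.

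The main bookkeeping obstacle will be the doubled weight at the special class $S((q-1)/4)$: the factor of $2$ must be split into one copy that completes the residue system and a second copy that survives as the $(-1)^{tk/2}$ correction in the final answer; the parity of $tk/2$ then dictates whether this residual is $0$ or $-2$. Once the correct residue systems are identified, everything reduces to the standard identity $\sum_{r=0}^{N-1}\zeta^{r}=0$ for $\zeta$ a nontrivial $N$-th root of unity, exactly as in the two preceding lemmas.
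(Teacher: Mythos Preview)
Your proposal is correct and follows exactly the route the paper takes: the paper's entire proof is the one line ``Similar to proof of \cref{lem:sum-roots-odd-3}'', and your argument is precisely that, with the appropriate complete residue systems $\{-(q-1)/4,\ldots,(q-1)/4\}$ modulo $(q+1)/2$ and $\{-(q-5)/4,\ldots,(q-5)/4\}\cup\{(q-1)/4\}$ modulo $(q-1)/2$ spelled out. Your handling of the doubled weight at the special class $S((q-1)/4)$---splitting the factor $2$ into one copy that completes the residue system and one surviving $(-1)^{tk/2}$---is exactly the extra bookkeeping this case requires beyond \cref{lem:sum-roots-odd-3}, and you have it right (note your computation even gives $(q-1)/2$ rather than $(q+1)/2$ in the degenerate branch of the $\epsilon$-identity, which is the correct value).
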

\begin{proof}
    Similar to proof of \cref{lem:sum-roots-odd-3}
\end{proof}
It is easy to check the following equalities;
$$
\begin{array}{cccc}
     \left\langle\psi_q^2,\psi_1\right\rangle=1,& \left\langle\psi_q^2,\psi_q\right\rangle=2, & \left\langle\psi_q^2,\psi_+'\right\rangle=1,  &
     \left\langle\psi_q^2,\psi_+''\right\rangle=1.
\end{array}
$$
Now 
\begin{equation}\label{eq:psi_q square psi_q+1 k 1}
    \frac{q^3-q}
{2}\left\langle \psi_q^2,\psi_{q+1}^{(k)}\right\rangle
= q^2(q+1) +q(q+1)\sum\limits_{a=1
}^{(q-5)/4}(\epsilon^{ak}+\epsilon^{-ak}) +q(q+1)\epsilon^{\frac{q-1}{4}k}.
\end{equation}
Using \cref{lem:sum-roots-odd-1} the sum in \cref{eq:psi_q square psi_q+1 k 1} reduces to  $\left\langle\psi_q^2,\psi_{q+1}^{(k)}\right\rangle=2$
 for all $k$.
Similarly, we observe that 
$\left\langle\psi_q^2,\psi_{q-1}^{(j)}\right\rangle=2$
for all $j$ that occurs as characters of $\PSL_2(q)$.
Since $\left\langle{\psi'_+}^2,\psi''_+\right\rangle=0$, we look at the inner product of ${\psi'_+}^3$ with all characters of $\PSL_2(q)$ to get the following :
$$
\begin{array}{cccc}
     \left\langle{\psi'_+}^3,\psi_1\right\rangle=1,& \left\langle{\psi'_+}^3,\psi_q\right\rangle=\frac{q+3}{4}, & \left\langle{\psi'_+}^3,\psi_{q+1}^k\right\rangle=\frac{q+7}{4},  &\\
      \left\langle{\psi'_+}^3,\psi_{q_1}^j\right\rangle=\frac{q-1}{4}, &
     \left\langle{\psi'_+}^3,\psi'_+\right\rangle=\frac{q+7}{4},  &
     \left\langle{\psi'_+}^3,\psi''_+\right\rangle=1.&
\end{array}
$$
Note that the interaction of ${\psi''_+}^3$ with the irreducible representations of $\PSL_2(q)$ will also be similar to that of ${\psi'_+}^3$. 

Thus it is remaining to check that some power of the characters $\psi_{q+1}^k$ and  $\psi_{q-1}^j$ contains all irreducible representations of $\PSL_2(q)$. 
Using \cref{lem:sum-roots-odd-1} we get the following:
$$
\begin{array}{ccccc}
     \left\langle{\psi_{q+1}^k}^2,\psi_1\right\rangle=1,& \left\langle{\psi_{q+1}^k}^2,\psi_q\right\rangle=3, & \left\langle{\psi_{q+1}^k}^2,\psi_{q-1}^j\right\rangle=2,  &\\
     \left\langle{\psi_{q+1}^k}^2,\psi'_+\right\rangle=1,  &
     \left\langle{\psi_{q+1}^k}^2,\psi''_+\right\rangle=1.&
\end{array}
$$
Now similar to before we see that for each $k$, there is a unique $k'$ such that $2k+k'=q-1$ or $2k-k'= q-1$. In this case we notice that $\left\langle \langle{\psi_{q+1}^(k)}^2,\psi_{q+1}^{(k')}\right\rangle=3$. In the case when $k'$ is such that neither $2k+k'$ nor $2k-k'$ is equal to $q-1$ we get that  $\left\langle \langle{\psi_{q+1}^(k)}^2,\psi_{q+1}^{(k')}\right\rangle=2$. 

Hence the character covering number for $\psi_{q+1}^{(k)}$ is equal to 3. Similarly it can be shown that ${\psi_{q-1}^{(j)}}^2$ contains all irreducible representations of $\PSL_2(q)$.


The table \cref{table:inner-product-1} summarises all the character values, where $(a,b)$-th entry is given by the inner product of the $a$-th element of the first column and the $b$-th element of the first row. Note that here $k''$ denotes the unique number such that $2k+k''=q-1$ or $2k-k''=q-1$ and $j''$ denotes the unique number such that $2j+j''=q+1$ or $2j-j''=q+1$.

\begin{table}[H]
   \centering
  \begin{tabular}{ccccccccc}
     \hline
    $\chi$&$\psi_1$ & $\psi_q$ & $\psi_{q+1}^{(k')}$ & 
    $\psi_{q+1}^{(k'')}$ &
    $\psi_{q-1}^{(j')}$ & 
    $\psi_{q-1}^{(j'')}$ &
               $\psi_+'$ & $\psi_+''$\\
       \hline
      $\left\langle\left(\psi^{(k)}_{q+1}\right)^2,\chi\right\rangle$ & $1$ & $3$ & $2$ & $3$& $2$ & $2$& $1$ & $1$ \\
        $\left\langle\left(\psi^{(j)}_{q-1}\right)^2,\chi\right\rangle$ & $1$ & $1$ & $2$ & $2$& $2$& $1$ & $1$ & $1$\\
     $\left\langle\left(\psi'_+\right)^3,\chi\right\rangle$ & $1$ & $\frac{q+3}{4}$ & $\frac{q+7}{4}$ & $\frac{q+7}{4}$ & $\frac{q-1}{4}$ & $\frac{q-1}{4}$ &  $\frac{q+7}{4}$ & $1$\\
     $\left\langle\left(\psi''_+\right)^3,\chi\right\rangle$ & $1$ & $\frac{q+3}{4}$ & $\frac{q+7}{4}$ & $\frac{q+7}{4}$ & $\frac{q-1}{4}$ &  $\frac{q-1}{4}$ & $\frac{q+7}{4}$ & $1$\\
       
        \hline
    \end{tabular}
    \caption{Inner product of the characters}
    \label{table:inner-product-1}
\end{table}

       

This completes the proof of \cref{thm:main}.
\printbibliography
\end{document}